\newtheorem{theorem}{Theorem}
\newcommand{\N}{{\mathbb N}}
\newcommand{\comment}[1]{} 
\begin{document}
\title{Moment densities of super-Brownian motion,  and a Harnack estimate for a class of $X$-harmonic functions}
\author{Thomas S. Salisbury \and A. Deniz Sezer}
\thanks{Both authors are supported in part by NSERC. This version \today. MSC 60J68 (primary); 60J45, 31B05 (secondary)}
\address{York University and University of Calgary}
\date{\today}
\begin{abstract}
This paper features a comparison inequality for the densities of the moment measures of super-Brownian motion.  These densities are defined recursively for each $n\geq 1$ in terms of the Poisson and Green's kernels, hence can be analyzed using the techniques of classical potential theory.  When $n=1$, the moment density is equal to the Poisson kernel, and the comparison is simply the classical inequality of Harnack.  For $n>1$ we find that the constant in the comparison inequality grows at most exponentially with $n$.  We apply this to a class of $X$-harmonic functions $H^\nu$ of super-Brownian motion, introduced by Dynkin. We show that for a.e. $H^\nu$ in this class, $H^\nu(\mu)<\infty$ for every $\mu$.

\end{abstract}
\maketitle

\section{A moment density inequality }
Let $D$ be a bounded and smooth domain in $\mathbb{R}^{d}$, $d\ge 2$ and let $\Delta=\sum_{i=1}^{d} \frac{\partial^2}{\partial x_i^2}$.
Let $g_D(x,y)$ and $k_D(x,y)$ be respectively the Green's and Poisson kernels of $D$ for the operator $\Delta$.  That is, for $x\in D$, $g_D(x,\cdot):D-\{x\}\to [0,\infty)$ is the unique $C^{2}(D-\{x\})\cap C^{1} (\bar{D}-\{x\})$ solution of
\begin{eqnarray*}
\label{green}
\Delta u&=&2\delta_x \mbox{ on }D \\
u&=&0\mbox{ on }\partial{D}.\nonumber
\end{eqnarray*}
Then for $z\in\partial D$, $k_D(x,y)\in [0,\infty)$ is the derivative of $g_D(x,z)$, considered as a function of $z$, in the direction of the inward conormal $n_y$ to $\partial D$ at the point $y\in\partial{D}$.  For fixed $y\in\partial{D}$, the function $u=k_D(\cdot,y)$ satisfies
\begin{eqnarray*}
\label{poisson}
\Delta u&=&0 \mbox{ on }D \\
u&=&0 \mbox{ on }\partial{D}-\{y\}.\nonumber
\end{eqnarray*}
For $y\in D$, $\lim_{x\rightarrow y}g_D(x,y)=\infty$. 
Similarly,  for fixed $y\in\partial{D}$, $\lim_{x\rightarrow y} k_D(x,y)=\infty$, provided the limit is taken non-tangentially. (See \cite{Doob84} for these and other
basic results in potential theory.) The notation $D'\Subset D$ will mean that $D'$ has compact closure which is contained in $D$.

Let $n\geq 1$, and let $z_1,\ldots,z_n$ be distinct points on $\partial{D}$.  We will
define a function $\rho_D(x,z_1,\ldots,z_n)$, $x\in D$ recursively, by defining a family of functions $\rho_D(x,z_A)$, where $A=\{i_1,\ldots,i_k\}\subset \{1,\ldots,n\}$ and  $z_A=(z_{i_1},\ldots,z_{i_k})$. Set
 \begin{eqnarray}
\label{fom}
\rho_D(x,z_A)&=&k_D(x,z_i)\mbox{ if $A=\{i\}$,}
\end{eqnarray}
and for $|A|>1$ set
\begin{eqnarray}
\label{recursion}
\rho_D(x,z_A)&=&\sum_{B\subset A, B\neq \emptyset, A} \int_D g_D(x,y)\rho_D(y,z_B)\rho_D(y,z_{A-B})dy.
\end{eqnarray}
We let $\rho_D(x,z_1,\ldots,z_n)=\rho(x,z_A)$ for $A=\{1,\ldots,n\}$.  It is known that $\rho_D(x,z_1,\ldots,z_n)$ is a finite valued function of $x\in D$.  See, for example \cite{Dyn04b} or \cite{SV}. In section \ref{SBMproofs} we will interpret $\rho$ as a moment density. Unless needed for clarity we will drop the subscript $D$ in $\rho_D$, $k_D$ and $g_D$. 

The main estimate of this section is the following theorem:

\begin{theorem} \label{theo:comparison} Let $D$ be a smooth and bounded domain in $\mathbb{R}^d$, $d\geq 2$.  For any compact $C\subset D$, there exists a $\lambda>0$ depending only on $C$ and $D$ such that
\[\rho(x,z_1,\ldots,z_n)\leq\lambda^n \rho(x_0,z_1,\ldots,z_n)\]
for all $x$ and $x_0\in C$ and for all distinct $z_1,\ldots,z_n \in \partial D^n$.

\end{theorem}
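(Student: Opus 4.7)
The plan is to proceed by induction on $n$. The base case $n=1$ is immediate: $\rho(x,z) = k(x,z)$ is a positive harmonic function of $x$ in $D$ for each $z \in \partial D$, and the classical Harnack inequality on $C \Subset D$ yields a constant $\lambda_0 = \lambda_0(C,D)$ with $k(x,z) \le \lambda_0\, k(x_0,z)$ for $x, x_0 \in C$ and $z \in \partial D$.

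For the inductive step, fix nested compacts $C \subset \mathrm{int}(K_1) \subset K_1 \subset \mathrm{int}(K_2) \subset K_2 \Subset D$ and split the integration in the recursion as $\int_D = \int_{D\setminus K_2} + \int_{K_2}$. For $y \in D\setminus K_2$, the function $g(\cdot,y)$ is positive and harmonic on a neighborhood of $C$, so Harnack gives $g(x,y) \le \lambda_g\, g(x_0,y)$, and this piece contributes at most $\lambda_g\, \rho(x_0,z_A)$. For $y \in K_2$, the inductive hypothesis applied to $K_2$ (with $|B|,|A-B|<n$) gives $\rho(y,z_B) \le \lambda_{K_2}^{|B|}\rho(x_0,z_B)$ and similarly for $A-B$, whence
\[
\sum_B \int_{K_2} g(x,y)\,\rho(y,z_B)\rho(y,z_{A-B})\,dy \;\le\; c_1\,\lambda_{K_2}^{n}\, f_A(x_0),
\]
with $c_1 = \sup_{x\in C}\int_{K_2} g(x,y)\,dy < \infty$ and $f_A(x_0) = \sum_B \rho(x_0,z_B)\rho(x_0,z_{A-B})$.

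It remains to bound $f_A(x_0)$ by $\rho(x_0,z_A)$. Using $\rho(x_0,z_A) = \int_D g(x_0,y) f_A(y)\,dy \ge \int_{B(x_0,r)} g(x_0,y) f_A(y)\,dy$ and choosing $r$ small enough that $B(x_0,r)\subset K_1$ for every $x_0 \in C$, the inductive hypothesis on $K_1$ gives $f_A(y) \ge f_A(x_0)/\lambda_{K_1}^{n}$ on $B(x_0,r)$. Setting $c_r = \inf_{x_0 \in C}\int_{B(x_0,r)} g(x_0,y)\,dy > 0$ yields $f_A(x_0) \le (\lambda_{K_1}^{n}/c_r)\rho(x_0,z_A)$, and combining all pieces,
\[
\rho(x,z_A) \;\le\; \Big[\lambda_g + (c_1/c_r)(\lambda_{K_1}\lambda_{K_2})^{n}\Big]\rho(x_0,z_A) \;\le\; \lambda^{n}\rho(x_0,z_A)
\]
for $\lambda$ chosen large enough in terms of $\lambda_g, \lambda_{K_1}, \lambda_{K_2}, c_1, c_r$.

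The main obstacle is closing the induction cleanly: the constant $\lambda$ for $C$ depends on the constants $\lambda_{K_1},\lambda_{K_2}$ for the enlarged compacts, which arise from the same argument applied to even larger compacts. Ensuring these constants remain finite (and uniform in $n$) for every $K \Subset D$ is the delicate part of the proof, and requires careful bookkeeping of how the constants propagate through the family of nested compacts as the induction advances through increasing values of $n$.
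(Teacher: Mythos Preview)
Your outline captures the right skeleton --- induction on $n$, splitting the integral into a ``far'' piece handled by a Harnack-type bound on $g(\cdot,y)$ and a ``near'' piece handled by the inductive hypothesis --- but the gap you flag at the end is real and, as written, fatal. The induction does not close: to compare $\rho(x,z_A)$ and $\rho(x_0,z_A)$ for $x,x_0\in C$ you invoke the inductive hypothesis on the strictly larger compacts $K_1,K_2$. Hence your $\lambda_C$ must satisfy $\lambda_C\ge c\,\lambda_{K_1}\lambda_{K_2}$, while $\lambda_{K_2}$ in turn is produced by the same argument on still larger compacts $K_1',K_2'$, and so on. Since the enlargement is by a fixed amount at each step, the nested compacts exhaust $D$ in finitely many steps (or force Harnack constants near $\partial D$, which blow up), and the product diverges. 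No choice of fixed $K_1,K_2$ avoids this.

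The paper resolves this by reversing the direction: it fixes in advance a \emph{decreasing} chain $C_0\supset C_1\supset\cdots\supset C$ with $\mathrm{dist}(C_{n+1},C_n^c)>\delta/(n+1)^2$, so that all $C_n$ still contain $C$ because $\sum 1/n^2<\infty$. The induction then proves the bound on $C_n$ at level $n$ using the hypothesis on the \emph{larger} set $C_{n-1}$ at level $n-1$. Two further ingredients make the constants uniform. First, the split is not at a fixed compact but at the shrinking ball $B(x,\delta/(Nn^2))$; outside this ball the $3$-$G$ inequality gives $g(x,y)/g(x_0,y)\le \tilde K+\tilde B\,n^{2(d-2)}$, which is polynomial in $n$ and hence dominated by $\lambda^n/2$ for a fixed $\lambda$. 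Second, on $B(x,\delta/(Nn^2))$ the paper does not merely cite the inductive hypothesis on a larger compact, but performs the change of variables $y'=x_0+M(y-x)$ and uses the explicit behaviour $g(x,y)\sim c_d|x-y|^{2-d}$ to convert the integral near $x$ into one near $x_0$, picking up only a factor $K/M^2\le 1/2$ before applying the hypothesis on $C_{n-1}$. The result is a clean recursion $\rho(x,z_A)\le \lambda^n\rho(x_0,z_A)$ on $C_n$ with a single $\lambda$ independent of $n$. Your lower bound $f_A(x_0)\le(\lambda_{K_1}^n/c_r)\rho(x_0,z_A)$, while correct, is not needed once the change of variables is in place.
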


We will describe the implications of this result in section \ref{application}.

One should compare the above estimate to Harnack's inequality: Indeed, when $n=1$, $\rho(x,z)=k(x,z)$. The theorem in this case therefore follows from Harnack's inequality, which asserts that there exists $\theta_C$ depending only on $C$ and $D$ s.t $k(x,z)\leq \theta_C k(x_0,z)$ for all $x,x_0\in C$ and $z\in \partial{D}$.   

The proof for general $n$ will be given in Section~\ref{proof}.  To see where the main challenge in the proof arises, consider the case $n=2$. Then, $\rho(x,z_1,z_2)=2 \int_D g(x,y) k(y,z_1)k(y,z_2)\,dy$. Due to Harnack's inequality, there exists a $\theta_{D'}$ such that  $g(x,y)\leq \theta_{D'} g(x_0,y) $ for any $y\in D-D'$, where $C\subset D'$, and $D'\Subset D$. If this inequality were to hold on the entire set $D$ it would be easy to compare $\rho(x,z_1,z_2)$ with $\rho(x_0,z_1,z_2)$.  However this is not the case, as $g(x,y)$ blows up when $y$ is near $x$.  Since the integral defining $\rho$ is taken over the entire domain $D$, in order to compare $\rho(x, \cdot)$ with $\rho(x_0,\cdot)$ we need finer estimates for $g(x,y)$ when $y$ is near $x$, and for $k(y,z_1)k(y, z_2)$ when $y$ is near  $z_1$ or $z_2$. 

The proof follows an inductive argument. We will obtain compact subsets $C_0\supset C_1\supset\ldots\supset C$ of $D$. Then for each $n$, we will prove that  
\[\rho(x,z_1,\ldots,z_n)\leq \lambda^n \rho(x_0,z_1,\ldots,z_n)\] for all $x,x_0\in C_{n}$, assuming 
\[\rho(x,z_1,\ldots,z_{n-1})\leq\lambda^{n-1} \rho(x_0,z_1,\ldots,z_{n-1})\]
for all $x,x_0\in C_{n-1}$. We do this by decomposing $\int_D g(x,y)\rho(y,z_B)\rho(y,z_{B^c})dy$ as 
 \[\int_{B(x,\delta_n)}g(x,y)\rho(y,z_B)\rho(y,z_{B^c})dy +\int_{D-B(x,\delta_n)}g(x,y)\rho(y,z_B)\rho(y,z_{B^c})dy\]
where $\delta_n=\frac{\delta}{Nn^2}$. Here $\delta$ is the diameter of the domain and $N$ is a certain constant strictly greater than $1$.  
For the first term, we use the induction hypothesis and the fact that $g(x,y)$ behaves like $|x-y|^{2-d}$ when $y$ is near $x$ to show that
\[\int_{B(x,\delta_n)}g(x,y)\rho(y,z_B)\rho(y,z_{B^c})dy \leq \frac{\lambda^{n}}{2}\int_{B(x_0,\delta'_n)}g(x_0,y)\rho(y,z_B)\rho(y,z_{B^c})dy\]
for some $\lambda>0$.
For the second term we use the 3-G inequality. In the case $d\ge 3$ this gives a $\theta$ depending only on $D$ such that  \[\frac{g(x,y)}{g(x_0,y)}\leq \theta \frac{|x-y|^{2-d}+|x-x_0|^{2-d}}{g(x,x_0)}.\] The critical factor on the right side is $|x-y |^{2-d}$, which results in a bound for $\frac{g(x,y)}{g(x_0,y)}$ on $D-B(x,\delta_n)$ of the order of $n^{2d}$ which can in turn be bounded by $\lambda^n$ if $\lambda$ is large enough. This implies that 
\[\int_{D-B(x,\delta_n)}g(x,y)\rho(y,z_B)\rho(y,z_{B^c})dy 
\leq \frac{\lambda^n}{2}   \int_{D-B(x,\delta_n)}g(x,y)\rho(y,z_B)\rho(y,z_{B^c})dy.\]
Putting these two pieces together will prove the theorem. 

The paper is organized as follows. In Section \ref{application} we will describe an application of Theorem \ref{theo:comparison} to  a finiteness question for an interesting class of extended $X$-harmonic functions related to super-Brownian motion (originally introduced by Dynkin). Section \ref{SBMproofs} contains the proof of this result, and the proof of Theorem \ref{theo:comparison} is given in Section \ref{proof}.

\section{$X$-harmonic functions}\label{application}

The functions $\rho_D(x,z_1,\ldots z_n)$ will arise as moment densities of super-Brownian motion.  More precisely, we consider a super-Brownian motion on some domain $E$ (larger than all domains $D$ of interest), as represented by Dynkin in terms of exit measures.  See e.g. \cite{Dyn02} for the definition and construction. This is a branching-exit-Markov system $(X_{D}, P_{\mu})$ indexed by sub-domains $D$ of $E$, and by finite measures $\mu$ supported in $E$. Under the probability law $P_\mu$, each $X_D$ is a random finite measure, referred to as the \emph{exit measure} from $D$ of super-Brownian motion started with measure $\mu$. Heuristically, in a branching particle approximation to super-Brownian motion, it would be approximated by a multiple of the empirical measure of the particles frozen when they exit $D$. More formally, it has the following basic properties:

\begin{enumerate}
\item Exit property: $P_{\mu}(X_{D}(D)=0)=1$ 
for every $\mu$, and if $\mu(D)=0$ then $P_{\mu}(X_{D}=\mu)=1$. 
\item Markov property:  If
$Y\ge 0$ is measurable with respect to the $\sigma$-algebra $\mathcal{F}_{\subset D}$ generated by $X_{D'},D'\subset D$ and $Z\geq 0$ is measurable with respect to the $\sigma$-algebra $\mathcal{F}_{\supset D}$ generated by $X_{D''}$, $D''\supset D$ then 
\[P_\mu(YZ)=P_\mu(YP_{X_{D}}Z).\]
\item Branching property:   For any non-negative Borel $f$, $P_\mu(e^{-\langle X_{D},f\rangle})=e^{-\langle \mu, V_Df\rangle}$ where 
\[V_Df(y)=-\log P_{y}(e^{-\langle X_D,f\rangle})\]
and $P_y=P_{\delta_y}$.
\item Integral equation for the log-Laplace functional:  $V_{D}f$ solves the integral equation
\[ u+G_D(2u^2)=K_D f\]
where $G_D$ and $K_D$ are respectively Green and Poisson operators for Brownian motion in $D$. 
\end{enumerate}
The operators referred to above are defined as follows: If $\xi_t$ is a Brownian motion starting from $x$, under a probability measure $\Pi_{x}$, 
then $K_{D}f(x)=\Pi_xf(\xi_{\tau_D})$, where $\tau_D$ is the exit time from $D$.  Likewise, $G_{D}f(x)=\Pi_x(\int_{0}^{\tau_D}f(\xi_t)dt)$.  
In terms of the kernels $g_D$ and $k_D$ defined earlier, $G_Df(x)=\int_Dg_D(x,y)\,dy$ and $K_Df(x)=\int f(y)k_D(x,y)\sigma(dy)$, where $\sigma$ is the surface measure on $\partial D$. 

Under certain regularity conditions on $D$ and $f$ (see e.g. \cite{Dyn02}), the integral equation in (d) is equivalent to the boundary value problem    
\begin{eqnarray*}
\frac{1}{2}\Delta u&=&2u^2\mbox{ on $D$}\\ 
u&=&f\mbox{ on $\partial D$}. 
\end{eqnarray*}
   
A recent research program, initiated by Dynkin, aims to build a Martin boundary theory for the above semi-linear pde employing ideas from the probabilistic construction of Martin boundary for the Laplace equation $\frac{1}{2}\Delta u=0$.  In this new context, the analogue of harmonic functions are $X$-harmonic functions, defined as follows.  Let $\mathcal{M}_{D}^c$ denote the space of finite measures supported on a compact subset of $D$. Consider a function $H:\mathcal{M}_{D}^c\to [0,\infty]$.  $H$ is called \emph{extended $X$-harmonic} in $D$ if
\[P_{\mu}(H(X_{D'}))=H(\mu)\]
for all $D'\Subset D$ and $\mu$ s.t. $\mu$ is supported in $D'$.
If, in addition $H(\mu)$ is finite for all $\mu\in \mathcal{M}_{D}^c$, then $H$ is called \emph{$X$-harmonic} (this definition is due to Dynkin -- see \cite{Dyn04b}). 

Define 
\[P_{\mu,X_D}(A):=P_{\mu}(X_D\in A, X_D\neq 0).\]
Theorem 5.3.2 of \cite{Dyn04a} shows that $P_{\mu_1,X_D}$ and $P_{\mu_2,X_D}$ are mutually absolutely continuous. We fix $x_0\in D$, and take $R=P_{\delta_{x_0},X_D}$ as a reference measure. Following 
\cite{Dyn06}, we define
\begin{equation}\label{RNdensity}
H^\nu(\mu)=H^\nu_D(\mu)=\frac{dP_{\mu,X_D}}{dR}(\nu).
\end{equation}
Unless there is ambiguity about the domain $D$ in question, we will suppress the subscript $D$ in $H^\nu_D$. 

Of course, for each $\mu$ this is only uniquely defined for $R$-a.e. $\nu$, so to proceed further we will need to specify more regular versions. The following is contained in Theorem 2 of \cite{SS} (whose proof is a modification of that of Theorem 1.1 of \cite{Dyn06}), and establishes the existence of a jointly measurable version of $H^{\nu}(\mu)$ that is extended $X$-harmonic for $R$-almost all $\nu$. 
We assume that $D$ is a bounded domain all of whose boundary points are regular.  The smoothness assumption of Theorem \ref{theo:comparison} is not required here.  
\begin{theorem}(\cite{Dyn06}, \cite{SS}\label{SSresult})
Let $D$ be a bounded and regular domain in $\mathbb{R}^d$, $d\ge 2$.  There exists a measurable function $(\mu,\nu)\mapsto H^\nu(\mu)\in[0,\infty]$, defined for finite $\mu\in \mathcal{M}_{D}^c$ and finite $\nu$ supported on $\partial D$, and an $R$-null set $V_0$ such that:
\begin{enumerate}
\item For each $\nu\notin V_0$, $\mu\mapsto H^\nu(\mu)$ is extended $X$-harmonic.
\item For each $\mu$, $\nu\mapsto H^\nu(\mu)$ is a version of the density in \eqref{RNdensity}.
\end{enumerate}
\end{theorem}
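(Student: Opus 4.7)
The plan is to construct a version of the Radon--Nikodym derivative in \eqref{RNdensity} that is jointly measurable in $(\mu,\nu)$, and then to arrange that the extended $X$-harmonic identity holds for every admissible pair $(\mu,D')$ simultaneously outside a single $R$-null set $V_0$.

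First I would construct $H^\nu(\mu)$ as a jointly measurable function. The space $\mathcal{M}_{D}^c$ is exhausted by the Polish spaces of finite measures supported in compact subsets of $D$, and finite measures on $\partial D$ also form a Polish space. The branching formula $P_\mu(e^{-\langle X_D,\phi\rangle})=e^{-\langle\mu,V_D\phi\rangle}$ shows that $\mu\mapsto P_{\mu,X_D}$ is weakly continuous, so a standard Radon--Nikodym construction based on a countable determining family of test functions, together with a martingale/partition approximation indexed by $\nu$, delivers a jointly measurable version that satisfies (b) by construction.

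Next, for each $(\mu,D')$ with $\mu$ supported in $D'\Subset D$, the Markov and exit properties give $P_\mu(X_D\in A,\,X_D\neq 0)=P_\mu\bigl(P_{X_{D'}}(X_D\in A,\,X_D\neq 0)\bigr)$ for every Borel $A$. Writing both sides using the $R$-densities and applying Fubini (justified by nonnegativity together with $\int H^\nu(\mu)\,R(d\nu)=P_\mu(X_D\neq 0)<\infty$) yields
\[\int_A H^\nu(\mu)\,R(d\nu)=\int_A P_\mu\bigl(H^\nu(X_{D'})\bigr)\,R(d\nu).\]
Since $A$ is an arbitrary Borel set, $H^\nu(\mu)=P_\mu\bigl(H^\nu(X_{D'})\bigr)$ for $R$-a.e.\ $\nu$, with an exceptional $R$-null set depending on the pair $(\mu,D')$.

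The main obstacle is promoting this family of pairwise almost-sure identities to a single $R$-null set $V_0$ outside which $\mu\mapsto H^\nu(\mu)$ is extended $X$-harmonic for every admissible $(\mu,D')$. I would fix a smooth exhaustion $D_k\Subset D_{k+1}\Subset D$ with $\bigcup_k D_k=D$ and a countable collection $\mathcal{C}$ of test measures of the form $\sum_j c_j\delta_{x_j}$ with rational weights $c_j\geq 0$ and atoms in a countable dense subset of $D$. Taking $V_0$ to be the union of the exceptional null sets indexed by the countably many pairs $(\mu,D_k)$ with $\mu\in\mathcal{C}$ supported in $D_k$ leaves $V_0$ an $R$-null set. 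Extending the identity from this countable family to \emph{all} admissible $(\mu,D')$ is the delicate step: it requires weak continuity of $\mu\mapsto P_\mu(H^\nu(X_{D'}))$, extracted from the log-Laplace representation in (c)--(d), together with a limiting argument that interpolates between the exhausting domains $D_k$ and an arbitrary $D'\Subset D$. Securing uniform control on the integrands $H^\nu(X_{D'})$ in these limits is the main technical hurdle, and it is precisely here that the modifications in \cite{SS} refine the construction of \cite{Dyn06}.
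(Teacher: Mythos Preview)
The paper does not prove this theorem; it is quoted from \cite{Dyn06} and \cite{SS}. What the paper \emph{does} record, in Section~\ref{SBMproofs}, is the structure of the construction from \cite{SS}: one builds the density $H^\nu(\mu)$ not by an abstract Radon--Nikodym/martingale approximation, but by the explicit series formula \eqref{eq:linear}, assembled from the superharmonic potentials $\gamma_\nu(\cdot)$ (densities of the excursion measure $N_{y,X_D}$ with respect to $R$) and the fragmentation kernels $K_n(\nu;d\nu_1,\dots,d\nu_n)$ coming from the Poisson structure of $X_D$ under $P_\mu$. Because \eqref{eq:linear} is a concrete formula in $\mu$ for each fixed $\nu$, the mean-value property $H^\nu(\mu)=P_\mu(H^\nu(X_{D'}))$ can be checked term by term, uniformly in $\mu$, once $\nu$ is fixed outside a single $R$-null set. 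The passage from a countable family of $\mu$'s to all $\mu$ is thus bypassed entirely: the formula is already defined for every $\mu$.

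Your route is genuinely different, and the gap you flag is real rather than a technicality. In your Step~3 you need, for $R$-a.e.\ fixed $\nu$, that $\mu\mapsto P_\mu(H^\nu(X_{D'}))$ is continuous (or at least determined by its values on a countable dense set of atomic $\mu$). But $H^\nu$ is at this stage only a measurable function of the exit measure, with no a~priori regularity and potentially taking the value $+\infty$; the log-Laplace formula gives you continuity of $\mu\mapsto P_\mu(F(X_{D'}))$ for bounded continuous $F$, not for an arbitrary nonnegative measurable $F$. Without additional structure on $H^\nu$ there is no reason the identity should propagate from rational atomic measures to all $\mu$, and indeed \cite{Dyn06} alone does not close this gap. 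The explicit representation \eqref{eq:linear} is precisely the extra structure that \cite{SS} supplies, and it is what makes the extended $X$-harmonic property hold for every $\mu$ once $\nu\notin V_0$.
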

For each fixed $\mu$, (b) implies that $H^{\nu}(\mu)<\infty$ for $R$-a.e. $\nu$. But the $R$-null set where $H^{\nu}(\mu)=\infty$ could in principle depend on $\mu$, so one does not automatically obtain any $\nu$ for which $H^{\nu}(\mu)<\infty\,\forall\mu$. In other words, it is not clear that any of the $H^\nu$ is $X$-harmonic. However, we can establish this using Theorem \ref{theo:comparison}:
\begin{theorem}\label{extendedSSresult}
Let $D$ be a bounded and regular domain in $\mathbb{R}^d$, $d\ge 2$.  There exists an $H^\nu(\mu)$ as in Theorem \ref{SSresult}, and an $R$-null set $V_1$ such that for $\nu\notin V_1$, $H^\nu$ is $X$-harmonic, and $0<H^\nu(\mu)<\infty$ for every $\mu$.
\end{theorem}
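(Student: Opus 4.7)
The plan is to use Theorem~\ref{theo:comparison} to produce moment bounds for $X_D$ under $P_\mu$ that are uniform over $\mu$ in any compact-support/bounded-mass class, and then to leverage this uniformity to extract a single $R$-null set $V_1$ outside which $H^\nu(\mu)<\infty$ for every $\mu\in\mathcal M_D^c$.

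First I would derive the moment expansion. Differentiating $-\log P_{\delta_x}(e^{-\theta\langle X_D,\phi\rangle})=V_D(\theta\phi)(x)$ in $\theta$ at $0$ and iterating~(d), one shows by induction on $n$ that $\rho_D(x,z_1,\ldots,z_n)$ is, up to the combinatorial factor $2^{n-1}$, the $n$-th cumulant density of $X_D|_{\partial D}$ under $P_{\delta_x}$. Writing the $n$-th mixed moment under $P_\mu$ as a sum over set partitions $\pi$ of $\{1,\ldots,n\}$ of products over blocks $B\in\pi$ of integrals $\int\mu(dx)\int\rho_D(x,z_B)\prod_{i\in B}\phi_i(z_i)\sigma^{\otimes|B|}(dz_B)$, and applying Theorem~\ref{theo:comparison} block-by-block with $M:=\max(\langle\mu,1\rangle,1)$, one obtains for $\mu$ supported in compact $C\subset D$ and $x_0\in C$:
\[P_\mu\Bigl[\prod_{i=1}^n\langle X_D,\phi_i\rangle\Bigr]\leq (\lambda M)^n\,P_{\delta_{x_0}}\Bigl[\prod_{i=1}^n\langle X_D,\phi_i\rangle\Bigr].\]
Via the Radon--Nikodym identity $\int F(\nu)H^\nu(\mu)\,R(d\nu)=P_\mu[F(X_D);X_D\neq 0]$, applied to polynomials in the $\langle\nu,\phi_i\rangle$, this translates to weighted $L^1(R)$-control of $H^{\cdot}(\mu)$ with constants depending on $\mu$ only through the pair $(C,M)$.

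Next I would fix a compact exhaustion $C_k\uparrow D$ and, for each $k$, a countable weakly-dense family $\{\mu_{k,j}\}_j$ in the class $U_k=\{\mu\in\mathcal M_D^c:\mathrm{supp}\,\mu\subset C_k,\;\langle\mu,1\rangle\leq k\}$. By Theorem~\ref{SSresult}(b), each $N_{k,j}:=\{\nu:H^\nu(\mu_{k,j})=\infty\}$ is $R$-null, so $V_1^\circ:=V_0\cup\bigcup_{k,j}N_{k,j}$ is $R$-null. To pass from the countable family to all $\mu\in U_k$, I would combine weak continuity of $\mu\mapsto P_{\mu,X_D}$ (from the Laplace identity) with the uniform weighted $L^1(R)$-bounds: for any $\mu_{k,j_\ell}\to\mu$ weakly one has $H^{\cdot}(\mu_{k,j_\ell})\to H^{\cdot}(\mu)$ weakly in $L^1(R)$, and Koml\'os' theorem extracts a further subsequence whose Ces\`aro averages $N^{-1}\sum_\ell H^\nu(\mu_{k,j_\ell})$ converge $R$-almost everywhere to $H^\nu(\mu)$. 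For $\nu\notin V_1^\circ$ every term in these averages is finite, so after enlarging $V_1^\circ$ by the countably many null sets arising from such extractions along a separable indexing of $\bigcup_k U_k$, we obtain a single $R$-null $V_1\supset V_1^\circ$ on whose complement $H^\nu(\mu)$ is an $R$-a.e.\ limit of finite Ces\`aro averages, hence finite, for every $\mu\in\mathcal M_D^c$. Strict positivity $H^\nu(\mu)>0$ off $V_1$ then follows from the mutual absolute continuity of $\{P_{\mu,X_D}\}_\mu$ (Theorem~5.3.2 of \cite{Dyn04a}), and $X$-harmonicity is immediate from finiteness combined with the extended $X$-harmonic property from Theorem~\ref{SSresult}(a).

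The main obstacle is precisely this upgrade from the $\mu$-dependent null sets of Theorem~\ref{SSresult}(b) to a \emph{single} $\mu$-independent $R$-null set $V_1$: $L^1(R)$-control does not automatically produce pointwise finiteness uniformly over the uncountable parameter $\mu$. Theorem~\ref{theo:comparison} is decisive here because its merely exponential-in-$n$ constant $\lambda^n$ makes the weighted $L^1(R)$-bounds of the first step depend on $\mu$ only through $(C,M)$; without that uniformity the countable-dense approximation / Koml\'os argument in the second step would collapse.
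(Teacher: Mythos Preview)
Your moment comparison in the first paragraph is essentially what the paper also proves (it is exactly the bound $\frac{dp_{\mu,n,\tilde D}}{dp_{\mu_0,n,\tilde D}}\le (a\lambda)^n$), so that part is fine. The gap is in the second step: the Koml\'os extraction you invoke produces, for each target measure $\mu$, a subsequence and an $R$-null exceptional set that \emph{depend on $\mu$}. Your phrase ``countably many null sets arising from such extractions along a separable indexing of $\bigcup_k U_k$'' does not resolve this: there are uncountably many $\mu$ in $U_k$, each requiring its own extraction, so you cannot collect the resulting null sets into a single $R$-null $V_1$. Moreover, even on the complement of $V_1^\circ$ the finiteness of every term $H^\nu(\mu_{k,j_\ell})$ does not imply finiteness of the limit of the Ces\`aro averages; a sequence of finite numbers can diverge. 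Finally, identifying the a.e.\ Koml\'os limit with the specific version $H^\nu(\mu)$ of the density also introduces a further $\mu$-dependent null set. In short, the approximation scheme recovers at best what Theorem~\ref{SSresult}(b) already gives, namely a $\mu$-dependent exceptional set.

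The paper avoids this by not approximating in $\mu$ at all. It uses the explicit series representation \eqref{eq:linear} together with the extended $X$-harmonic identity $H^\nu(\mu)=P_\mu(H^\nu(X_{\tilde D}))$ on a smooth subdomain $\tilde D$: plugging the series into $P_\mu(e^{\langle X_{\tilde D},u\rangle}H^\nu(X_{\tilde D}))$, your moment inequality (applied inside the sum, term by term in $n$) yields
\[
H^\nu(\mu)\le P_\mu(\tilde H^\nu(X_{\tilde D}))\le P_{\mu_0}(\tilde H^\nu(a\lambda X_{\tilde D}))\le P_{\mu_0}(\tilde H^\nu(k X_{D_i})),
\]
where $k\in\N$ dominates $a\lambda$ and $i$ indexes a smooth exhaustion containing $\mathrm{supp}\,\mu$. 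The right-hand side depends on $\nu$ and on the \emph{countable} pair $(k,i)$ only, not on $\mu$; one then shows $\int e^{-M\langle\nu,1\rangle}P_{\mu_0}(\tilde H^\nu(kX_{D_i}))\,R(d\nu)<\infty$ via the exponential-moment bound of \cite{SV}, so each such quantity is $R$-a.e.\ finite and $V_1$ is a countable union of genuine null sets. The key idea you are missing is this routing through $P_{\mu_0}(\tilde H^\nu(kX_{D_i}))$, which replaces the uncountable parameter $\mu$ by the countable parameters $(k,i)$.
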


The above is the principle result of this section, and the main application of Theorem \ref{theo:comparison}. We give the proof in section \ref{extendedSSresult}, and will spend the remainder of the current section setting the context for this result, and giving relations to other work.

Note first that the statement of Theorem 1.1 of \cite{Dyn06} is stronger (ie. $X$-harmonic) than than what is verified in the proof there (ie. what we call extended $X$-harmonic). Obtaining the required finiteness estimate was the motivation for the current paper. 

If $H$ is a non-zero $X$-harmonic function one can proceed to define an {\it $H$-transform} of the law $P_{\mu}$. Set $P_{\mu}^H(Y)=\frac{1}{H(\mu)}P_{\mu}(YH(X_{D'}))$ for any non-negative $Y$ measurable with respect to $\mathcal{F}_{D'}=\sigma(X_{\tilde{D}},  \tilde{D}\subset D')$ where $D'\Subset D$ contains the support of $\mu$.   Because $H$ is $X$-harmonic, the Markov property implies that these probability laws will be consistent as we vary $D'$, so can be uniquely extended to $\mathcal{F}_{D-}=\sigma(X_{D'},  D'\Subset D)$, as in Section 1.3 of \cite{Dyn06}.
See also Theorem 2(d) of \cite{SS}. Note that the mutual absolute continuity of $P_{\mu_1,X_D}$ and $P_{\mu_2,X_D}$, for $\mu_1$ and $\mu_2$ compactly supported in $D$, implies that if $H(\mu)$ is non-zero from one $\mu$ then it is non-zero for all $\mu$. Thus the only obstacle to defining $P_{\mu}^H$ for $H$ extended $X$-harmonic is the possibility that $H(\mu)=\infty$.

Fix $a\in D$, $a\neq x_0$ and let $\mathcal{H}^a$ be the convex set of all $X$-harmonic functions s.t. $P_{\delta_a}(H(X_{D'}))=1$ for some $D'$ with $a\in D'\Subset D$. The (minimal) Martin boundary is then defined as the set of extreme elements of $\mathcal{H}^{a}$. In \cite{Dyn04b} it is shown that this set is independent of the reference point $a$. One motivation for introducing the Radon-Nikodym densities $H^\nu(\mu)$ of \cite{Dyn06} was to try to recover the extreme $X$-harmonic functions by a deterministic limiting procedure from these densities, using the classical exit theory for Markov chains.  What is actually known is a weak version of such a result. Specifically, Dynkin succeeded in showing that if $H$ is an extreme $X$-harmonic function in $D$ and $D_n\Subset D$ is a sequence of domains exhausting $D$, then $H(\mu)=\lim_{n\rightarrow\infty}H^{X_{D_n}}_{D_n}(\mu)$, $P_{\mu}^H$ almost surely. A stronger version is conjectured, namely that there exists a deterministic sequence $\nu_n$ (not depending on $\mu$) and canonical versions of the $H^{\nu_n}_{D_n}$ (eg. having some type of regularity in $\nu_n$), such that $H(\mu)=\lim_{n\rightarrow\infty}H^{\nu_n}_{D_n}(\mu)$ for every $\mu$. Dynkin has pointed out that significant progress could be made if one knew that the densities $H^{X_{D_n}}_{D_n}$ were uniformly integrable. A subsequent paper \cite{DynkinIJM06}  focused on the densities $H^{\nu}_{D}(\mu)$ and the search for more analytically tractable representations. 

The densities $H^{\nu}_{D}$ played a key role in \cite{SS} as well, in particular, in the analysis of super-Brownian motion $(X_{D'})_{D'\Subset D}$ conditioned on its exit measure $X_D$.  In addition to Theorem \ref{SSresult}, that paper showed that $P_\mu^{H^\nu_D}$ is the conditional law of SBM given $X_D=\nu$, for $R$-a.e. $\nu$ satisfying $H^{\nu}_D(\mu)<\infty$.  Assuming that $D$ is a regular domain, they express $H^{\nu}_D$ as an infinite sum of integrated polynomial-like functions of $\mu$ involving a family of fragmentation kernels $K_n(\nu,d\nu_1,\ldots,\nu_n)$ and a family of potentials $\gamma_{\nu}(\cdot)$, (see formula (\ref{eq:linear}) below). This representation is then used to derive an infinite fragmentation system description for $P_\mu^{H^\nu_D}$.

In the current paper we will use the representation of \cite{SS} to tie the densities $H^{\nu}_D$ to the functions $\rho(x,z_,\ldots,z_n)$ and prove Theorem \ref{extendedSSresult}.  This suggests that the comparison inequality of Theorem \ref{theo:comparison} may prove useful in investigating other regularity  properties of $H^{\nu}_D$.   In a subsequent paper, one of us (Sezer) will study the extremality properties of this family for smooth domains.  As a future direction of research, we hope to use the comparison inequality to investigate uniform integrability properties of the sequences $H^{\nu_n}_{D_n}$ of \cite{Dyn06}.  

We conjecture that a much stronger version of Theorem \ref{extendedSSresult} is true. Namely that if $H$ is any extended $X$-harmonic function, and $H(\mu)<\infty$ for some $\mu$, then $H(\mu)<\infty$ for all $\mu$.

\section{Proof of Theorem \ref{extendedSSresult}}
\label{SBMproofs}
   
It is well known that $X_{D}$ has an infinitely divisible distribution for each $D$.  This property is tied to a construction of the probabilities $P_\mu$ in terms of another (now $\sigma$-finite) measure $\mathbb{N}_{x}$, called the \emph{super-Brownian excursion law}.  Once again, there are finite exit measures $X_D$ under $\mathbb{N}_{x}$, for every $D$. 
Form a Poisson random measure 
\[\Pi(d\chi)=\sum_{i}\delta_{\chi^{i}}\]
with intensity $\mathcal{R}_\mu(A)=\int \N_{x}(X_D\in A,X_D\neq 0)\mu(dx)$. Then under $P_\mu$, $X_D$ has the law of $\sum\chi^i=\int\chi\Pi(d\chi)$.

The $n$-dimensional moment measures of SBM are the following measures defined on $(\partial D)^n$:
Let $f(z_1,\ldots,z_n)=f_1(z_1)\ldots f_n(z_n)$ where $f_i$ are positive Borel functions on $\partial D$, and define
\begin{align}
\label{eq:nmoment}
N_{x,n,D}(f)&=\mathbb{N}_{x}(\langle
X_D,f_1\rangle\cdots \langle
X_D,f_n\rangle )\\
\label{eq:pmoment}
p_{\mu,n,D}(f)&=P_{\mu}(\langle X_D,f_1 \rangle\cdots \langle
X_D,f_n\rangle ).
\end{align}

\begin{theorem}[Theorem 5.3.1 of \cite{Dyn04a}] \label{densities}Let $x\in D$, and let $\mu$ be compactly supported in $D$. If $D$ is smooth then 
both $N_{x,n,D}$ and 
$p_{\mu,n,D}$ have densities with respect to $\sigma(dz_1)\times \sigma(dz_2)\times\cdots\times \sigma(dz_n)$ where $\sigma(dz)$ denotes the surface measure on $\partial D$.  Moreover,  
\begin{align*}
N_{x,n,D}(dz_1,\ldots ,dz_n)&=\rho(x,z_1,\ldots,z_n)\sigma(dz_1) \sigma(dz_2)\cdots \sigma(dz_n)\\
p_{\mu,n,D}(dz_1,\ldots,dz_n)&=\sum_{\pi(n)}\langle\mu,\rho(\cdot,z_{C_1})\rangle\cdots\langle\mu,\rho(\cdot,z_{C_r})\rangle
\sigma(dz_1) \sigma(dz_2)\cdots \sigma(dz_n)
\end{align*}
where $\pi(n)$ denotes the set of partitions $\{C_1,\dots,C_r\}$ of $\{1,\ldots,n\}$.
\end{theorem}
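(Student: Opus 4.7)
The plan is to prove the formula for $N_{x,n,D}$ first and then lift it to $p_{\mu,n,D}$ via the Poisson cluster representation of $X_D$ under $P_\mu$.

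For $N_{x,n,D}$, I would start from the log-Laplace identity
\[V_Df(x)=\mathbb{N}_x(1-e^{-\langle X_D,f\rangle}),\]
which follows from property (c) together with the cluster representation. Fix nonnegative Borel $f_1,\ldots,f_n$ on $\partial D$, introduce parameters $\vec\theta=(\theta_1,\ldots,\theta_n)$, and set $f_{\vec\theta}=\sum_i\theta_if_i$. Applying $\partial_{\theta_1}\cdots\partial_{\theta_n}|_{\vec\theta=0}$ to both sides, the right hand side yields $(-1)^{n-1}N_{x,n,D}(f_1\otimes\cdots\otimes f_n)$. On the left, writing
\[v_A(x)=\Big(\prod_{i\in A}\partial_{\theta_i}\Big)V_Df_{\vec\theta}(x)\Big|_{\vec\theta=0}\]
for $A\subset\{1,\ldots,n\}$, a first-order expansion of the integral equation gives $v_{\{i\}}=K_Df_i$. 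Differentiating $V_Df_{\vec\theta}+G_D(2V_Df_{\vec\theta}^2)=K_Df_{\vec\theta}$ by Leibniz, and using that the right hand side is linear in $\vec\theta$ and that $V_Df_0=0$, we obtain for $|A|\ge 2$ the recursion
\[v_A+2G_D\Big(\sum_{B\subsetneq A,\,B\neq\emptyset}v_Bv_{A-B}\Big)=0.\]
Setting $w_A=(-1)^{|A|-1}v_A$ yields $w_A=2G_D\big(\sum_B w_Bw_{A-B}\big)$ with $w_{\{i\}}=K_Df_i$. Since the paper normalizes $g_D$ by $\Delta g_D(\cdot,y)=2\delta_y$, this is precisely the recursion (\ref{recursion}) satisfied by $\int\rho(x,z_A)\prod_{i\in A}f_i(z_i)\,\sigma(dz_i)$. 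Induction on $|A|$ then identifies the two, and taking $A=\{1,\ldots,n\}$ gives the density formula for $N_{x,n,D}$.

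For $p_{\mu,n,D}$, under $P_\mu$ one has the cluster representation $X_D=\sum_i\chi^i$, where $\sum_i\delta_{\chi^i}$ is Poisson with intensity $\mathcal{R}_\mu(d\chi)=\int\mu(dx)\,\mathbb{N}_x(X_D\in d\chi,\,X_D\neq 0)$. The classical partition moment identity for integrals against a Poisson point process, applied to the functionals $\chi\mapsto\langle\chi,f_i\rangle$, gives
\[P_\mu\Big[\prod_{i=1}^n\langle X_D,f_i\rangle\Big]=\sum_{\pi(n)}\prod_{C\in\pi}\int\mu(dx)\,N_{x,|C|,D}\big(\textstyle\bigotimes_{i\in C}f_i\big),\]
where the indicator $\chi\neq 0$ has been dropped since the product $\prod_i\langle\chi,f_i\rangle$ vanishes at $\chi=0$. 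Substituting the density of $N_{x,|C|,D}$ already established and collecting surface factors yields the claimed formula.

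The principal obstacle is justifying the differentiation under $\mathbb{N}_x$. This requires (i) pointwise finiteness of $\rho(x,z_1,\ldots,z_n)$, which the excerpt states is proven in \cite{Dyn04b,SV}; (ii) smooth (indeed real-analytic) dependence of $V_Df_{\vec\theta}(x)$ on $\vec\theta$ near the origin, which follows from the analytic implicit function theorem applied to the semilinear elliptic boundary value problem $\tfrac12\Delta u=2u^2$, $u=f$ on $\partial D$, under the smoothness assumption on $\partial D$; and (iii) interchange of $\mathbb{N}_x$ with $\partial_{\vec\theta}$, which can be handled by a simultaneous induction in which the lower-order moment bounds supplied by (i) furnish the required domination at each step.
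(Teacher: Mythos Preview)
The paper does not prove this statement; it is quoted as Theorem~5.3.1 of \cite{Dyn04a} and used as a black box in the proof of Theorem~\ref{extendedSSresult}. There is therefore no proof in the paper to compare your attempt against.

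That said, your outline is the standard derivation, and is essentially Dynkin's. Differentiating the identity $V_Df_{\vec\theta}(x)=\mathbb{N}_x\big(1-e^{-\langle X_D,f_{\vec\theta}\rangle}\big)$ in the parameters, combined with the integral equation for $V_D$, gives exactly the recursive structure of (\ref{fom})--(\ref{recursion}); the passage from $\mathbb{N}_x$-moments to $P_\mu$-moments via the Poisson cluster representation and the partition moment formula for Poisson integrals is correct. The technical points you isolate (finiteness of $\rho$, analytic dependence on $\vec\theta$, domination to justify the interchange) are precisely the ones that require care, and the inductive scheme you propose is the usual way to handle them.

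One bookkeeping remark. With the integral equation as stated in the paper, $u+G_D(2u^2)=K_Df$, your Leibniz computation gives
\[
w_A=2\,G_D\Big(\sum_{\emptyset\neq B\subsetneq A}w_Bw_{A-B}\Big),
\]
whereas (\ref{recursion}) reads $\rho(\cdot,z_A)=\sum_B\int g_D(\cdot,y)\rho(y,z_B)\rho(y,z_{A-B})\,dy$ with no explicit factor $2$. Your sentence ``this is precisely the recursion (\ref{recursion})'' therefore hides a genuine constant that does not cancel under the stated normalization of $g_D$. The discrepancy reflects a convention mismatch between the branching rate in property~(d) and the recursion (\ref{recursion}) as written here; you should fix one consistent set of constants (as in \cite{Dyn04a}) and carry it through explicitly rather than absorbing it into the normalization of $g_D$.
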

The proof of Theorem \ref{extendedSSresult} involves only $p_{\mu,n,D}$, not $N_{x,n,D}$. But the latter gives the natural context for directly interpreting $\rho$. 

Let $V^*$ be the space of nonzero and finite measures on $\partial{D}$. Let $u(x)=\N_x(X_D\neq 0)$. 
For a regular domain,  \cite{SS} construct a family of functions $\{\gamma_{\nu}:D\to (0,\infty), \nu\in V^*\}$ such that the mapping $(\nu,y)\mapsto \gamma_{\nu}(y)$ is measurable, each $\gamma_\nu$ is superharmonic,  and for all $y\in D$
\begin{equation*}
N_{y,X_{D}} (d\nu):=\N_{y}(X_D\in d\nu, X_D\neq 0)=\gamma_{\nu}(y)R(d\nu) \label{eq:gamma}.
\end{equation*}
In addition,  \cite{SS} construct 
a measurable strictly positive kernel 
${K}_{n}(\nu;d\nu_1,d\nu_2,\dots,d\nu_{n})$
from $V^*$ to $(V^*)^{n}$, concentrated on $\{(\nu_1,\dots,\nu_n): \nu_1+\dots+\nu_n=\nu\}$, and an $R$-null set $V_0$, such that
\begin{eqnarray}\label{eq:linear}
H^{\nu}(\mu)=\left\{\begin{array}{l}e^{-\langle\mu,u\rangle+u(x_0)}, \mbox{ if $\nu=0$}\\
\sum_{n=1}^{\infty}\int \frac{1}{n!}e^{-\langle\mu,u\rangle}K_{n}(\nu;d\nu_1,\dots,d\nu_{n})
\langle\mu,\gamma_{\nu_1}\rangle\langle\mu,\gamma_{\nu_2}\rangle\cdots\langle\mu,\gamma_{\nu_n}\rangle,\quad
\mbox{if $\nu\neq 0$}\end{array}\right.
\end{eqnarray}
is extended $X$-harmonic for each $\nu\notin V_0$, is a version of the density in \eqref{RNdensity} for each $\mu$, and also satisfies
\begin{equation*}
\gamma_{\nu}(y)=\N_{y}(H^{\nu}(X_{D'}))\label{eq:version}
\end{equation*}
for every $y$, every $D'\Subset D$ such that $y\in D'$, and every $\nu\notin V_0$. 

\begin{proof}[Proof of Theorem \ref{extendedSSresult}] Fix $a>0$. It suffices to show the statement of the theorem for $\mu$ satisfying $\langle\mu,1\rangle\le a$. We may assume $\nu\neq 0$. 

Because $D$ is regular there exists a sequence of smooth domains $D_1\Subset D_2\Subset \ldots \Subset D$ exhausting 
$D$.
Let $x_0$ be the reference point fixed earlier, and let $\mu_0$ be the point mass at $x_0$.  Let $\mu$ be another finite measure compactly supported in $D$.  Then there exists a smooth domain $\tilde{D}\in \{D_1,D_2,\ldots\}$ containing the point $x_0$ and the support of $\mu$. Choose $H^\nu(\mu)$, $V_0$, $\gamma_\nu$ and $K_n$ as above. 

Let $\rho_{\tilde{D}}(x,z_1,\ldots,z_n)$, and $\rho_{\tilde{D}}(x,z_C), C\subset\{1,\ldots,n\},n\geq 1$ be the family of functions defined by equations (\ref{fom}) and (\ref{recursion}).
Let $p_{\mu,n,\tilde{D}}$ be the measure on $\partial{\tilde D}$ defined as in (\ref{eq:pmoment}).  Note that by Theorem \ref{densities},
\begin{equation}\label{radonnikodym}
\frac{dp_{\mu,n,\tilde{D}}}{dp_{\mu_0,n,\tilde{D}}}(z_1,\ldots,z_n)=\frac{\sum_{\pi(n)}\langle\mu,\rho(\cdot,z_{C_1})\rangle\cdots\langle\mu,\rho(\cdot,z_{C_r})\rangle}{\sum_{\pi(n)}\rho(x_0,z_{C_1})\cdots\rho(x_0,z_{C_r})}.
\end{equation}
By Theorem \ref{theo:comparison} there exists $\lambda>0$ depending only on $\tilde{D}$ and $C\cup\{x_0\}$ such that 
\begin{equation}
\rho(x,z_{C_i})\leq \lambda^{|C_i|}\rho(x_0,z_{C_i})\mbox{ for every $x\in C$.} \label{inequality1}
\end{equation}
Equation (\ref{radonnikodym}) and the inequality (\ref{inequality1}) imply that
\begin{equation}\label{radonnikodym2}
\frac{dp_{\mu,n,\tilde{D}}}{dp_{\mu_0,n,\tilde{D}}}\leq (a\lambda)^n.
\end{equation}
Let $\tilde{H}_{x}^{\nu}(\mu)=e^{\langle\mu,u\rangle}H^{\nu}_x(\mu)$.  Using the formula \eqref{eq:linear}, the Fubini theorem, and \eqref{radonnikodym2}, we obtain
\begin{eqnarray*}
P_{\mu}(\tilde{H}_{x}^{\nu}(X_{\tilde{D}}))&=&\sum_{n=1}^{\infty}\int \frac{1}{n!}K_{n}(\nu;d\nu_1,\dots,d\nu_{n})
P_{\mu}\left(\langle X_{\tilde{D}},\gamma_{\nu_1}\rangle\cdots\langle{X_{\tilde{D}},\gamma_{\nu_{n}}}\rangle\right)\nonumber\\
&=&\sum_{n=1}^{\infty}\int \frac{1}{n!}K_{n}(\nu;d\nu_1,\dots,d\nu_{n})
p_{\mu,n,\tilde{D}}\left(\gamma_{\nu_1}\cdots\gamma_{\nu_{n}}\right)\nonumber\\
&=&\sum_{n=1}^{\infty}\int \frac{1}{n!}K_{n}(\nu;d\nu_1,\dots,d\nu_{n})
p_{\mu_0,n,\tilde{D}}\left(\frac{dp_{\mu,n,\tilde{D}}}{dp_{\mu_0,n,\tilde{D}}}\gamma_{\nu_1}\cdots\gamma_{\nu_{n}}\right)\nonumber\\
&\leq&\sum_{n=1}^{\infty}\int \frac{1}{n!}K_{n}(\nu;d\nu_1,\dots,d\nu_{n})
p_{\mu_0,n,\tilde{D}}\left((a\lambda)^n\gamma_{\nu_1}\cdots\gamma_{\nu_{n}}\right)\nonumber\\
&=&\sum_{n=1}^{\infty}\int \frac{1}{n!}K_{n}(\nu;d\nu_1,\dots,d\nu_{n})P_{\mu_0}\left(\langle a\lambda X_{\tilde{D}},\gamma_{\nu_1}\rangle\cdots\langle a\lambda X_{\tilde{D}},\gamma_{\nu_{n}}\rangle\right)\nonumber\\
&=&P_{\mu_0} (\tilde{H}^{\nu} (a\lambda X_{\tilde{D}})).\label{Pmu}
\end{eqnarray*}

We will now show that there exists a $V_1\subset V^*$ such that $V_0\subset V_1$, $R(V_1)=0$, and for all $\nu\notin V_1$, $P_{\mu_0} (\tilde{H}^{\nu} (k X_{D_i}))<\infty$ for all integers $k\geq 1$ and $i\geq 1$. This will establish the theorem, since 
for all $\nu\notin V_1$, 
\[H^{\nu}(\mu)=P_{\mu}(H^{\nu}(X_{\tilde{D}}))\leq  P_{\mu}(\tilde{H}^{\nu}(X_{\tilde{D}}))\leq P_{\mu_0}(\tilde{H}^{\nu}(a\lambda X_{\tilde{D}}))\le P_{\mu_0}(\tilde{H}^{\nu}(k X_{\tilde{D}}))<\infty\]
whenever $k\ge a\lambda$. Note that $H^\nu(\mu)>0$ by definition, since the $\gamma_{v_i}>0$. 

Fix $i$ and $k$.  Let $M>0$. For fixed $\mu$, $H^{\nu}(\mu)$, $\nu\neq 0$ is a version of $\frac{dP_{\mu,X_D}}{dR}(\nu)$, so
\begin{eqnarray*}
\int_{V^*}e^{-\langle \nu, M\rangle}P_{\mu_0}(\tilde{H}^{\nu}(k X_{D_i}))R(d\nu)&=& P_{\mu_0}(\int_{V^*}e^{-\langle \nu, M\rangle}\tilde{H}^{\nu}(k X_{D_i})R(d\nu))\\
&=& P_{\mu_0}(e^{\langle k X_{D_i},u\rangle} \int_{V^*}e^{-\langle \nu, M\rangle}dP_{k X_{D_i},X_D}(d\nu)\\
&=& P_{\mu_0}(e^{\langle k X_{D_i},u\rangle} P_{k X_{D_i}} (e^{-\langle X_D, M\rangle}1_{X_D\neq 0}))\\
& \leq & P_{\mu_0}(e^{\langle k X_{D_i},u\rangle} P_{k X_{D_i}} (e^{-\langle X_D, M\rangle}))\\
&=& P_{\mu_0}(e^{\langle k X_{D_i},u\rangle} e^{-\langle k X_{D_i}, u_M\rangle}),
\end{eqnarray*}
where $u_M$ is the function on $D$ defined by $u_M(x)=\log P_x( e^{-\langle X_{D}, M\rangle})$.

The above derivation shows that $\int_{V^*}e^{-\langle \nu, M\rangle}P_{\mu_0}(\tilde{H}^{\nu}(k X_{D_i}))R(d\nu)\leq P_{\mu_0}(e^{\langle X_{D_i},\lambda (u-u_M)\rangle})$. This is in fact finite if $M$ is sufficiently large.  To see this we use two facts: First, that $u_M$ converges to $u$ uniformly on the closure of $\tilde{D}$, as they are both continuous functions and $u_M\uparrow u$ pointwise in $D$ as $M\rightarrow \infty$.  Second, it is proven in \cite{SV} that there exists an $\epsilon>0$ such that 
$P_{x_0}(e^{\langle X_{D_i},\epsilon\rangle})<\infty$.  Hence choosing $M$ large enough that $\sup_{\bar{\tilde{D}}}u-u_M<\frac{\epsilon}{k}$ will ensure that  $\int_{V^*}e^{-\langle \nu, M\rangle}P_{\mu_0}(\tilde{H}^{\nu}(k X_{D_i}))R(d\nu)$ is finite.  

Because $e^{-\langle \nu, M\rangle}$ is always nonzero, it follows that $P_{\mu_0}(\tilde{H}^{\nu}(k X_{D_i}))<\infty$, for $R$-a.e. $\nu$.  
Now let $V_1$ be the subset of $V^*$ defined as 
\[V_1=V_0\cup \bigcup_{i=1}^{\infty}\bigcup_{k=1}^{\infty}\{\nu\in V^*:P_{\mu_0}(\tilde{H}^{\nu}(k X_{D_i}))=\infty\}.\]
Then $R(V_1)=0$, as required.  \end{proof}

\section{Proof of Theorem \ref{theo:comparison}}\label{proof}

We give the proof first for $d\ge 3$, and then describe how to modify it for $d=2$. 
Let $\delta=\mbox{dist}(C,\partial{D})$.  We can find a sequence of smoothly bounded domains $D_n$ with compact closures $C_n$ such that $D\supset C_0\supset C_1\supset C_2\supset\ldots \supset C$ and
\begin{equation}
\mbox{dist}(C_{n+1},C_n^c)> \frac{\delta}{(n+1)^2}, \mbox{ for all $n\geq 1$.}\label{assumption2}
\end{equation}
From Harnack's inequality there exists a constant $\phi>0$ depending only on $C_0$ and $D$ such that
\begin{equation}
k(x,z)\leq\phi k(x_0,z)\mbox{for all $x,x_0\in C_0$, and $z\in \partial{D}$}.\label{assumption3}
\end{equation}
The 3-G inequality of \cite{CFZ} gives a constant $\theta>0$ depending only on $D$ such that
\begin{equation}
\frac{g(x,y)}{g(x_0,y)}\leq \theta \frac{|x-y|^{2-d}+|x-x_0|^{2-d}}{g(x,x_0)}\mbox{for all $x,x_0,y\in D$.}\label{assumption4}
\end{equation}
Set $\beta=\text{dist}(C_0,\partial D)\land 1$. Recall that there is a constant $c_d$ such that $g(x,y) =c_d|x-y|^{2-d}-h_x(y)$ where $h_x$ is harmonic on $D$ with boundary value $c_d|x-\cdot|^{2-d}$.
Therefore 
\begin{align}
g(x,y)&\le c_d|x-y|^{2-d}\text{ for $x,y\in D$, and}\label{g_upperbound}\\
g(x,y)&\ge c_d|x-y|^{2-d}-c_d\beta^{2-d}\text{ if $x,y\in D$, $B(x,\beta)\subset D$,}\label{g_lowerbound}
\end{align}
since $0\le h_x\le c_d\beta^{2-d}$. This will imply the existence of finite constants $K$ and $B$, depending only on $D$ and $C_0$, such that
\begin{align}
\frac{1}{g(x,x_0)}&\leq B\text{ and}\label{assumption5b}\\
\frac{c_d|x-x_0|^{2-d}}{g(x,x_0)}&\leq K\text{, for all $x,x_0\in C_0$, $x\neq x_0$.}\label{assumption5a}
\end{align}
To see this, note that $g(x,x_0)$ is jointly continuous off the diagonal, and everywhere $>0$. Thus there is a strictly positive lower bound for $g$ on $C_0\times C_0$ less any neighbourhood of the diagonal. \eqref{g_lowerbound} gives such a bound on a neighbourhood of the diagonal in $C_0\times C_0$. Thus \eqref{assumption5b} holds. 

To see \eqref{assumption5a}, if $x,x_0\in C_0$ and $|x-x_0|\ge \frac{\beta}{2}$ then 
$$
g(x,x_0)\ge \frac{1}{B}\ge \frac{1}{Bc_d}\Big(\frac{\beta}{2}\Big)^{d-2}\cdot c_d|x-x_0|^{2-d}.
$$
While if $x,x_0\in C_0$ and $|x-x_0|\le \frac{\beta}{2}$ then 
$$
g(x,x_0)[1+Bc_d\beta^{2-d}]\ge g(x,x_0)+c_d\beta^{2-d}\ge c_d|x-x_0|^{2-d}
$$
by \eqref{g_lowerbound}. Therefore \eqref{assumption5a} holds with $K=\max(Bc_d(\beta/2)^{2-d},1+Bc_d\beta^{2-d})$. 

Take $N>1$ to be a number whose value will be determined later. Let $A=\{1,2,\ldots,n\}$, where $n=|A|\geq 2$. Let $x$ and $x_0$ be two distinct points in $C_n$.  Note that
\begin{eqnarray}\label{decomposition}
\int_D g(x,y)\rho(y,z_B)\rho(y,z_{A-B})dy&=&\int_{B(x,\frac{\delta}{Nn^2})} g(x,y)\rho(y,z_B)\rho(y,z_{A-B})dy \nonumber\\
&&\mbox{ }+\int_{D- B(x,\frac{\delta}{Nn^2})}g(x,y)\rho(y,z_B)\rho(y,z_{A-B})dy.
\end{eqnarray} 
We will obtain an estimate for each term in the above decomposition.

We start with the second term.  By the 3-G inequality (\ref{assumption4}), we have the bound
\[g(x,y)\leq g(x_0,y) \frac{\theta}{g(x_0,x)} \left(|x-x_0|^{2-d}+\left(\frac{Nn^2}{\delta}\right)^{d-2}\right)\mbox{for $y\in D- B(x,\frac{\delta}{Nn^2})$.}\]
Combining this with the estimates (\ref{assumption5b}) and (\ref{assumption5a}), we get
\[g(x,y)\leq g(x_0,y) \theta \left(\frac{K}{c_d}+\frac{BN^{d-2}n^{2(d-2)}}{\delta^{d-2}}\right)\mbox{for $y\in D- B(x,\frac{\delta}{Nn^2})$}.\]
Writing $\tilde{K}=\theta \frac{K}{c_d}$ and $\tilde{B}=\theta \frac{BN^{d-2}}{\delta^{d-2}}$, we have
\begin{eqnarray}
\int_{D- B(x,\frac{\delta}{Nn^2})}g(x,y)\rho(y,z_B)\rho(y,z_{A-B})dy&\leq& \left(\tilde{K}+\tilde{B} n^{2(d-2)}\right)\int_{D- B(x,\frac{\delta}{Nn^2})}g(x_0,y)\rho(y,z_B)\rho(y,z_{A-B})dy \nonumber\\
&\leq&\left(\tilde{K}+\tilde{B} n^{2(d-2)}\right)\int_Dg(x_0,y)\rho(y,z_B)\rho(y,z_{A-B})dy. \label{secondterm}
\end{eqnarray}

To treat the first term in the decomposition (\ref{decomposition}), we introduce a new variable $y'=x_0+M(y-x)$, $y\in B(x,\frac{\delta}{Nn^2})$, for some  $M\ge 1$ whose value will be determined later. Since $y=x+\frac{1}{M}(y'-x_0)$, we have that $dy=M^{-d} dy'$, and $y\in B(x, \frac{\delta}{Nn^2})$ if and only if $y'\in B(x_0, \frac{M\delta}{Nn^2})$.  So by \eqref{g_upperbound}
\begin{align}
\int_{B(x,\frac{\delta}{Nn^2})} g(x,y)\rho(y,z_B)\rho(y,z_{A-B})dy
&\leq c_d\int_{B(x,\frac{\delta}{Nn^2})}|x-y|^{2-d} \rho(y,z_B)\rho(y,z_{A-B})dy\nonumber\\
&=c_d\int_{B(x_0,\frac{M\delta}{Nn^2})} \frac{1}{M^{2-d}}|x_0-y'|^{2-d}\rho(y,z_B)\rho(y,z_{A-B})M^{-d}dy'\nonumber\\
&=c_d\int_{B(x_0,\frac{M\delta}{Nn^2})} \frac{1}{M^{2}}|x_0-y'|^{2-d}\rho(y,z_B)\rho(y,z_{A-B})dy'.\label{firstbound}
\end{align}
Take $N=2M$. Note that $N> 1$, so this is consistent with the specification we gave earlier for $N$. Since $x_0\in C_n$, we have $B(x_0, \frac{M\delta}{Nn^2})=B(x_0, \frac{\delta}{2n^2})\subset C_{n-1}\subset C_0$ by \eqref{assumption2}. Therefore we may apply \eqref{assumption5a} and \eqref{firstbound} to get
\begin{equation}
\int_{B(x,\frac{\delta}{Nn^2})} g(x,y)\rho(y,z_B)\rho(y,z_{A-B})dy
\le \frac{K}{M^2}\int_{B(x_0,\frac{\delta}{2n^2})}g(x_0,y')\rho(y,z_B)\rho(y,z_{A-B})dy'.\label{first term}
\end{equation}
Choose $M\ge \sqrt{2K}\lor 1$ and 
combine the inequalities (\ref{first term}) and (\ref{secondterm}),  to see that
\begin{eqnarray}
\int_D g(x,y)\rho(y,z_B)\rho(y,z_{A-B})dy&\leq & \left(\tilde{K}+\tilde{B}n^{2(d-2)}\right)\int_{D} g(x_0,y)\rho(y,z_B)\rho(y,z_{A-B})dy\nonumber\\
\label{decompositionbound}
&&\mbox{ }+\frac{1}{2}\int_{ B(x_0,\frac{\delta}{2n^2})}g(x_0,y')\rho(y,z_B)\rho(y,z_{A-B})dy'.
\end{eqnarray}
Recall in the above inequality that $x$ and $x_0$ are arbitrarily selected points in $C_n$, $n\geq 2$, and the constants $\tilde{K}$, $\tilde B$ are independent of $x$, $x_0$ and $n$.

To complete the proof we will proceed with an induction argument. Let $\lambda$ be large enough that $\lambda \geq \phi$ (the constant in (\ref{assumption3})) and $\frac{\lambda^n}{2}\geq \tilde{K}+\tilde{B}n^{2(d-2)}$ for all $n\geq 1$.  Then we have that 
\[\rho(x,z)\leq \phi\rho(x_0,z)\leq \lambda \rho(x_0,z)\mbox{ for all  $x,x_0\in C_1,z\in\partial{D}$.}\]
 Let $n\geq 2$, and assume inductively that the following holds for all $k\leq n-1$, $x,x_0\in C_k$, $z_1,\ldots z_k\in \partial D$:
\[\rho(x,z_1,\ldots,z_k)\leq \lambda^k\rho(x_0,z_1,\ldots,z_k). \]
 Put $A=\{1,2,\ldots,n\}$.  Using the inequality (\ref{decompositionbound})  for all $x,x_0$ in $C_n$ we have that
 \begin{eqnarray}\label{induction}
\int_D g(x,y)\rho(y,z_B)\rho(y,z_{A-B})dy&\leq & \frac{\lambda^n}{2}\int_{D} g(x_0,y)\rho(y,z_B)\rho(y,z_{A-B})dy\nonumber\\
&&\mbox{ }+\frac{1}{2}\int_{ B(x_0,\frac{\delta}{2n^2})}g(x_0,y')\rho(y,z_B)\rho(y,z_{A-B})dy'.
\end{eqnarray} 
Note that if $y'\in B(x_0,\frac{\delta}{2n^2})$ then $y=x+\frac{1}{M}(y'-x_0)$ is in $B(x,\frac{\delta}{2Mn^2})$.  Since $M\geq 1$, $B(x,\frac{\delta}{2Mn^2})\subset C_{n-1}$ by (\ref{assumption2}).  Note also that $B(x_0,\frac{\delta}{2n^2})\subset C_{n-1}$.  By the inductive hypothesis, if $y\in B(x,\frac{\delta}{2Mn^2})$, and $y'\in B(x_0,\frac{\delta}{2n^2})$ then
\[\rho(y,z_B)\leq \lambda^{|B|}\rho(y',z_B)\mbox { and }\rho(y,z_B)\leq \lambda^{|A-B|}\rho(y',z_{A-B})\]
for all proper subsets $B$ of $A$.  Substituting this in (\ref{induction}) we get that
\begin{eqnarray*}
\int_D g(x,y)\rho(y,z_B)\rho(y,z_{A-B})dz&\leq & \frac{\lambda^n}{2}\int_{D} g(x_0,y)\rho(y,z_B)\rho(y,z_{A-B})dy\nonumber\\
&&\mbox{ }+\frac{1}{2}\int_{ B(x_0,\frac{\delta}{2n^2})}g(x_0,y')\lambda^{|B|}\lambda^{|A-B|}\rho(y',z_B)\rho(y',z_{A-B})dy'\\
&\leq&\lambda^n \int_D g(x_0,y)\rho(y,z_B)\rho(y,z_{A-B})dy.
\end{eqnarray*}
Hence
\begin{eqnarray*}
\rho (x,z_1,\ldots, z_n)&=&\sum_{B\subset A, B\neq \emptyset, A} \int_D g(x,y)\rho(y,z_B)\rho(y,z_{A-B})dy\\
&\leq&\lambda^n\sum_{B\subset A, B\neq \emptyset, A} \int_D g(x_0,y)\rho(y,z_B)\rho(y,z_{A-B})dy\\
&=&\lambda^n \rho(x_0,z_1,\ldots,z_n ).
\end{eqnarray*}
This establishes the result, when $d\ge 3$.

Essentially the same argument works when $d=2$. The 3G inequality in that case is classical; We will use the version stated in Theorem 1.1 of \cite{McC90}, namely that there exists $\theta$ for which 
\begin{equation}
\frac{g(x,y)}{g(x_0,y)}\leq \theta \frac{g(x,y)+g(x,x_0)+1}{g(x,x_0)}\mbox{ for all $x,x_0,y\in D$.}\label{assumption4in2d}
\end{equation}
Instead of \eqref{g_upperbound} and \eqref{g_lowerbound} we have constants $\tilde c$ and $c_2$ such that 
\begin{align*}
g(x,y)&\le c_2\log(1/|x-y|) +\tilde c\text{ for $x,y\in D$, and}\\
g(x,y)&\ge c_2\log(1/|x-y|)-c_2\log(1/\beta)\text{ if $x,y\in D$, $B(x,\beta)\subset D$.}
\end{align*}
As before, this gives \eqref{assumption5b} and 
$$
\frac{c_2\log(1/|x-x_0|)}{g(x,x_0)}\leq K
$$
for all $x,x_0\in C_0$, $x\neq x_0$, with $K=\max(Bc_2\log(2/\beta),1+Bc_2\log(1/\beta))$. 

Let $N>1$ and let $x,x_n\in C_n$ be distinct. By \eqref{assumption4in2d}, we have for $y\in D- B(x,\frac{\delta}{Nn^2})$ that
\begin{align*}
g(x,y)&\le
g(x_0,y)\theta\left(\frac{g(x,y)+1}{g(x,x_0)}+1\right)\\
&\le g(x_0,y)\theta\Big(B(1+\tilde c+c_2\log(1/|x-y|))+1\Big)\\
&\le g(x_0,y)\theta\Big(B(1+\tilde c+c_2\log(Nn^2/\delta))+1\Big).
\end{align*}
So for $\tilde K=\theta\Big(B(1+\tilde c+c_2\log(N/\delta))+1\Big)$ and $\tilde B=2\theta Bc_2$ we get
\begin{equation*}
\int_{D- B(x,\frac{\delta}{Nn^2})}g(x,y)\rho(y,z_B)\rho(y,z_{A-B})dy
\le (\tilde{K}+\tilde{B} \log n)\int_Dg(x_0,y)\rho(y,z_B)\rho(y,z_{A-B})dy \label{secondtermin2d}
\end{equation*}
in place of \eqref{secondterm}.

Take $N=2M$ and set $y'=x_0+M(y-x)$ as before. In place of \eqref{firstbound} we obtain that
\begin{align*}
\int_{B(x,\frac{\delta}{Nn^2})} g(x,y)\rho(y,z_B)\rho(y,z_{A-B})dy
&\leq \int_{B(x,\frac{\delta}{Nn^2})}\Big(c_2\log(1/|x-y|)+\tilde c\Big) \rho(y,z_B)\rho(y,z_{A-B})dy\nonumber\\
&=\int_{B(x_0,\frac{M\delta}{Nn^2})} \frac{1}{M^{2}} \Big(c_2\log(M/|x_0-y'|)+\tilde c\Big)\rho(y,z_B)\rho(y,z_{A-B})dy'\nonumber\\
&\le \int_{B(x_0,\frac{\delta}{2n^2})} \frac{K+B(\tilde c+c_2\log M)}{M^{2}} g(x_0,y')\rho(y,z_B)\rho(y,z_{A-B})dy'.\label{firstboundin2d}
\end{align*}
Now choose $M$ so large that $ \frac{K+B(\tilde c+c_2\log M)}{M^{2}}\le\frac12$, and $\lambda$ so that $\frac{\lambda^n}{2}\ge \tilde K +\tilde B\log n$ for $n\ge 1$, and the proof proceeds as before. 
\qed

\bibliography{SBM}
\bibliographystyle{imsart-nameyear}

\end{document}